%
%
%


\makeatletter

\def\eqalign#1{\,\vcenter{\openup\jot\m@th
  \ialign{\strut\hfil$\displaystyle{##}$&$\displaystyle{{}##}$\hfil
      \crcr#1\crcr}}\,}
\def\eqalignno#1{\displ@y \tabskip\@centering
  \halign to\displaywidth{\hfil$\displaystyle{##}$\tabskip\z@skip
    &$\displaystyle{{}##}$\hfil\tabskip\@centering
    &\llap{$##$}\tabskip\z@skip\crcr
    #1\crcr}}
\def\leqalignno#1{\displ@y \tabskip\@centering
  \halign to\displaywidth{\hfil$\displaystyle{##}$\tabskip\z@skip
    &$\displaystyle{{}##}$\hfil\tabskip\@centering
    &\kern-\displaywidth\rlap{$##$}\tabskip\displaywidth\crcr
    #1\crcr}}

\makeatother

\newdimen\pixel \pixel=.00333333 true in

\documentclass[11pt]{article}
\usepackage{amsmath}
\usepackage{amssymb}
\usepackage{epsfig}
\usepackage{fullpage}
\usepackage{color}
\usepackage{tikz}
\definecolor{light-gray}{gray}{0.3}





\makeatletter
\def\bigpar{\bigbreak\@afterindentfalse\@afterheading\ignorespaces}
\def\medpar{\medbreak\@afterindentfalse\@afterheading\ignorespaces}
\def\smallpar{\smallbreak\@afterindentfalse\@afterheading\ignorespaces}

\newlength{\saveindent}
\newenvironment{proof}%
      {\bigpar{\bf Proof:}\ 
             \setlength{\saveindent}{\parindent} 
                       \ignorespaces}%
      {\stopproof\ignorespaces\bigbreak \setlength{\parindent}{\saveindent}}

      {\bigpar{\bf Proof:}%
             \setlength{\saveindent}{\parindent} 
                       \,\ignorespaces}%
      {\ignorespaces\bigbreak \setlength{\parindent}{\saveindent}}

      {\bigpar{\bf Proof:}\ %
             \setlength{\saveindent}{\parindent} 
                       \ignorespaces}%
      {\ignorespaces\bigbreak \setlength{\parindent}{\saveindent}}

\newenvironment{proofof}[1]%
      {\bigpar{\bf#1:}\ %
             \setlength{\saveindent}{\parindent} 
                       \ignorespaces}%
      {\stopproof\ignorespaces\bigbreak \setlength{\parindent}{\saveindent}}

      {\smallpar{\bf Remark:}\ 
                       \ignorespaces}%
      {\stopproof\ignorespaces\medbreak \setlength{\parindent}{\saveindent}}

\newenvironment{remark*}%
      {\smallpar{\bf Remark:}\ 
                       \ignorespaces}%
      {\ignorespaces\medbreak \setlength{\parindent}{\saveindent}}

      {\smallpar{\bf Remarks:}\ 
                       \ignorespaces}%
      {\stopproof\ignorespaces\medbreak \setlength{\parindent}{\saveindent}}

\newenvironment{remarks*}%
      {\smallpar{\bf Remarks:}\ 
                       \ignorespaces}%
      {\ignorespaces\medbreak \setlength{\parindent}{\saveindent}}

      {\smallpar{\bf Remark:}\ %
                       \ignorespaces}%
      {\stopproof\ignorespaces\medbreak \setlength{\parindent}{\saveindent}}

      {\smallpar{\bf Remarks:}\ %
                       \ignorespaces}%
      {\stopproof\ignorespaces\medbreak \setlength{\parindent}{\saveindent}}
\makeatother

\newtheorem{theorem}{Theorem}
\newtheorem{lemma}[theorem]{Lemma}

\newtheorem{example}{Example}

\def\begex{\begin{example}\parindent=0pt \rm}
\def\endex{\end{example}}
\def\square{\vbox{\hrule height.2pt\hbox{\vrule width.2pt height5pt \kern5pt
                                   \vrule width.2pt} \hrule height.2pt}}
\def\stopproof{\hfill \square \smallskip}

\def \htt {{\hat q}}

\def \ap {{a_*}}
\def \bp {{b_*}}
\def \app {{a_{**}}}
\def \aux {{auxiliary }}

\def\rtr {{Random-to-Random }}
\def\ccr {{Card-Cyclic-to-Random }}

\newcommand{\1}{\mbox{\bf 1}}

\def \siigma {{\hat \sigma}}

\def \U {{ \cal U}}

\def\ofrac#1#2{{\textstyle{#1 \over #2}}}
\def\sfrac#1#2{{\textstyle{#1 \over #2}}}


\def\half{{\textstyle{1\over2}}}

\def \r {{\bf R}}

\def \r {{ \cal F}}

\def \ap {\alpha'}
\def \bp {\beta'}
\def\diam{{\rm diam}}

\def\r|{{\Bigr\vert}}
\def\l|{{\Bigl\vert}}

\def \R {{\bf R}}


\def\phi {\Phi}

\def\e{\epsilon}

\def\varepsilon{\mathchar"122 }

\def \chi {{\mathbf 1}}

\def\Gamma {{}}

\def\P {{ \mathbb P}}

\def\e {{ \mathbb {E}}}
\def\E {{ \mathbb {E}}}

\def\Sq{{\cal S}_q}
\def\Sq-{{\cal S}_{q-1}}

\def\f {{\cal F}}

\def\given {{\,|\,}}

\newcommand{\lab}{\label}

\newcommand{\be}{\begin{eqnarray}}
\newcommand{\ee}{\end{eqnarray}}
\newcommand{\eps}{{\mbox{$\epsilon$}}}

\begin{document}
\title{Mixing time of the \ccr shuffle}
\author{
{\sc Ben Morris}\thanks{Department of Mathematics,
University of California, Davis.
Email:
{\tt morris@math.ucdavis.edu}.
Research partially supported by
NSF grant DMS-1007739.}
\and
{\sc Weiyang Ning}\thanks{Department of Mathematics,
University of Washington.
Email:
{\tt ningw@u.washington.edu}.
}
\and
{\sc Yuval Peres}\thanks{
Microsoft Research.
Email:
{\tt peres@microsoft.com}.
}  }
\date{}
\maketitle
\begin{abstract}
\noindent
The \ccr shuffle on $n$ cards is defined as follows:
at time $t$ remove the card with label $t$ mod $n$ and randomly reinsert it back into the deck.
Pinsky \cite{P} introduced this shuffle and asked how many steps are needed to mix the deck. He showed $n$ steps do not suffice. 
Here we show that the mixing time is on the order of $\Theta(n \log n)$. 
\end{abstract}
Key words: Markov chain, mixing time.
\setcounter{page}{1}

\section{Introduction} \lab{intro}
In many Markov chains, such as Glauber Dynamics for the Ising model,
the state space is a set of configurations, and at each step a
location is chosen and updated.
An important general question about such chains
is what happens when we move from the world of {\it random updates},
where at each step a location is chosen at random
and updated, to {\it systematic scan},
when the updates are done in a more
deterministic fashion (see e.g., \cite{DR}). On the one hand, systematic scan is ``less random'',
so one might expect that the mixing
time is larger. On the other hand,
systematic scan can update $n$ sites in $n$ steps,
whereas with
random updates $n \log n$
steps are required by the coupon collector problem, so one might
expect systematic scan to have a smaller mixing time.

This question has been
investigated in the context of the random transpositions shuffle.
In this shuffle, at each step a pair of cards is chosen uniformly at random and interchanged.
In a classical result of Diaconis and Shahshahani \cite{DS}, the mixing time of the random transposition shuffle is shown to be asymptotically ${1\over 2}n \log n$. Mironov \cite{M}, Mossel, Peres and Sinclair \cite{MPS}, and Saloff-Coste and Zuniga \cite{SZ1} analyzed the Cyclic-to-Random shuffle, which is a systematic scan version of the random transposition shuffle:
at step $t$ the card in position $t$ mod $n$ is interchanged with a randomly chosen card.
They found that the mixing time for this chain is still on the order of $n \log n$.

In the present paper, we study a systematic scan version of the \rtr insertion shuffle.
In the \rtr insertion shuffle, at each step a card is chosen uniformly at random and then inserted in a uniform random position.
It was shown in \cite{DS1}, \cite{U} and \cite{S} that the mixing time of this shuffle is on the order of $n\log n$.
Pinsky \cite{P} introduced the following model, called the \ccr shuffle:
at time $t$ remove the card with the label $t$ mod $n$ and insert it in a uniform random position.
It is not obvious that the mixing time is greater than $n$: after $n$ steps the location of each card has been randomized,
so one might expect the whole deck to be close to uniform at time $n$. However, Pinsky showed that the mixing time is indeed greater than $n$,
since the total variation distance to stationarity at this time converges to $1$ as $n$ goes to infinity.
We show that in fact the mixing time is on the order of $n \log n$.
To prove the lower bound we
introduce the concept of a barrier
between two parts of the deck that moves along with the cards as the shuffling
is performed. Then we show that the trajectory of this barrier
can be well-approximated by a deterministic function $f$ satisfying
\be
\label{ma3}
f(x) = \int_{x-1}^x f(s) ds,
\ee and
we relate the mixing rate of the chain to the rate at which $f$
converges to a constant.
To prove the upper bound, we use the path
coupling method of Bubley and Dyer \cite{BD}.

\section{Statement of main results}
  Let $X_t$
be a Markov chain on a finite state space $V$
that converges to the uniform distribution.
For probability measures $\mu$ and $\nu$ on $V$,
define the {\it total variation distance}
$|| \mu - \nu || = \half \sum_{x \in V} |\mu(x) - \nu(x) |$,
and
define the $\epsilon$-mixing time
\[
\label{mixingtime}
t_{\rm mix}(\epsilon) = \min \{t:   || \P_x(X_t =  \cdot) - \U ||  \leq \epsilon \mbox{ for all $x \in V$}\} \,,
\]
where $\U$ denotes the uniform distribution on $V$.

Recall that in the \ccr shuffle,
at time $t$ we remove the card with label $t \bmod n$ and then reinsert it into a
uniform random location.

Define a round to be $n$ consecutive such shuffles. Note that the Markov chain that performs a round of the \ccr shuffle
at each step is time-homogeneous
with a doubly-stochastic transition matrix,
irreducible and aperiodic, hence converges to the uniform stationary distribution.
It follows that the \ccr shuffle converges to uniform as well.
Our main results show that the mixing time is on the order of $\log n$ rounds.

\begin{theorem}\label{lowerbound}
There exists $c_0$ such that for any $c<c_0$ and $0<\eps<1$, when $n$ is sufficiently large, we have
$$t_{\rm mix}(\eps)\geq cn\log n.$$ Here $c_0={1\over 2+2a}$ where $a$ is the smallest positive solution of equations $b=e^{a}\sin b$ and $a=e^{a}\cos b-1$. Numerically $c_0=0.161875162...$.
\end{theorem}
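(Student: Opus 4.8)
The plan is to prove the lower bound by the second moment method: work with the chain started from the identity permutation (the maximum over starting states in the definition of $t_{\rm mix}$ lets us choose it) and exhibit a real statistic $\Phi$ on permutations whose mean along the chain stays $\gg n^{-1/2}$ until a time close to $c_0 n\log n$, while the variance of $\Phi$ is $O(1/n)$ both along the chain and under $\U$; Chebyshev's inequality then forces the total variation distance to be $1-o(1)$ up to that time. The statistic comes from a \emph{barrier}, as indicated in the introduction: a marker that splits the deck into two parts — initially the cards with labels $\le n/2$ above it and the rest below — and that is carried along with the cards as shuffling proceeds, the carrying rule being chosen so that a card changes side only gradually and so that the card currently being refreshed has a distribution of sides governed by where the barrier sat \emph{one round earlier}. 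Give card $i$ the weight $e_i=+1$ if its label is $\le n/2$ and $-1$ otherwise, and let $\Phi_t=\tfrac1n\sum_i e_i\,\1[\text{card }i\text{ is above the barrier at time }t]$. Then $\Phi_0$ is of order $1$, while $\E_\U[\Phi]=0$ because $\sum_i e_i=0$; this zero-mean design is what makes $\Phi$ useful.

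The heart of the argument — and the step I expect to be the main obstacle — is to show that $\E[\Phi_t]$ is, in the scaling $x=t/n$ with $n\to\infty$, well approximated by a deterministic profile satisfying (\ref{ma3}). Since the Card-Cyclic rule refreshes every card exactly once per round, the expected contribution of the card refreshed at rescaled time $s$ is discounted by roughly $e^{-(x-s)}$ by time $x$ (the probability that none of the intervening $\approx(x-s)n$ single-card moves has carried the barrier past that card), and the refresh time $s$ is spread uniformly over the preceding round $[x-1,x]$; averaging over $s$ yields a renewal/delay equation for $\E[\Phi_t]$, and setting $f(x)=e^{x}\,\E[\Phi_{xn}]$ reduces it to exactly (\ref{ma3}), with the zero-mean (and oscillation-adapted) choice of $\Phi$ ensuring that the slowest, purely exponential mode $e^{-x}$ is not excited, so that $f$ itself tends to $0$. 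Making this rigorous requires controlling the barrier's fluctuations (it performs an $O(n^{-1/2})$-per-round random walk, hence stays near its initial fraction), quantifying the decorrelation between a card's current side and its label, and bounding all error terms uniformly over the horizon $t=O(n\log n)$, so that both $\Phi_t-\E[\Phi_t]$ and the discretisation error are negligible next to the main term.

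Next I would analyze (\ref{ma3}) by separation of variables. Substituting $f(x)=e^{zx}$ gives $\int_{x-1}^x e^{zs}\,ds=e^{zx}$, i.e.\ the characteristic equation $1-e^{-z}=z$, equivalently $e^{w}=w+1$ with $w=-z$. Its only real root is $0$ (the constant solution); every nonzero root has $\mathrm{Re}(w)>0$, and writing $w=a+ib$ the relation $e^{w}=w+1$ is precisely the system $b=e^{a}\sin b$, $a=e^{a}\cos b-1$, so the nonzero root of $1-e^{-z}=z$ of largest real part is $z=-a-ib$, with $a$ the constant of the theorem. Hence every solution of (\ref{ma3}) with no constant part obeys $f(x)\asymp e^{-ax}\cos(bx+\theta)$ for some phase $\theta$; a Laplace-transform (contour-shifting) argument shows the coefficient of this slowest mode is nonzero for our initial data. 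Consequently $\E[\Phi_t]\asymp e^{-x}f(x)=e^{-(1+a)t/n}\cos(bt/n+\theta)$ up to the bounded oscillation.

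Finally, conclude. Fix $c<c_0=\tfrac{1}{2+2a}$ and let $T=cn\log n$. Since $|\cos(b\,\cdot/n+\theta)|\ge\half$ somewhere in every $t$-interval of length $\pi n/b$, we may choose $t^{*}\in[T,\,T+\pi n/b]$ with $|\E[\Phi_{t^{*}}]|\ge \tfrac12 c_1 e^{-(1+a)t^{*}/n}\ge c_2\, n^{-c(1+a)}$ for constants $c_1,c_2>0$. As $c(1+a)<c_0(1+a)=\half$, this is $\gg n^{-1/2}$, so applying Chebyshev's inequality to $\Phi_{t^{*}}$ under $\P_{\mathrm{id}}$ and to $\Phi$ under $\U$ (both of variance $O(1/n)$) gives $\|\P_{\mathrm{id}}(X_{t^{*}}\in\cdot)-\U\|=1-o(1)>\epsilon$ for $n$ large. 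Because the uniform distribution is invariant under each step of the shuffle, $t\mapsto\|\P_{\mathrm{id}}(X_t\in\cdot)-\U\|$ is non-increasing, so the distance exceeds $\epsilon$ for every $t\le t^{*}$, in particular for every $t<T$; hence $t_{\rm mix}(\epsilon)\ge cn\log n$, as desired.
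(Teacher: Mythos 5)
The barrier idea, the delay equation $f(x)=\int_{x-1}^x f(s)\,ds$, and the analysis of its characteristic roots ($e^w=w+1$, giving the constants $a,b$) all match the paper, so you have the right overall picture. But there are two concrete errors in how you propagate this into the final bound, and they happen to cancel, so the fact that your exponent $c_0=\frac{1}{2+2a}$ comes out correctly is misleading. First, the discounting heuristic is wrong: once card $i$ is re-inserted, its side of the barrier is \emph{frozen} until the next time card $i$ itself is refreshed, because the barrier is carried along between the same two neighbors when other cards are removed and re-inserted — a move of card $j$ can never ``carry the barrier past'' a stationary card $i$. Consequently $\E[\Phi_t]$ is simply a signed, sliding-window average $\tfrac1n\sum_{s=t-n+1}^{t}e_{s\bmod n}\,g(s-1)$ of the barrier profile $g$; since $g(s)-\tfrac12\asymp e^{-as/n}$, this gives $\E[\Phi_t]\asymp e^{-at/n}$ up to oscillation, \emph{not} $e^{-(1+a)t/n}$. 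There is no renewal equation for $\Phi$ and no extra factor $e^{-x}$. Second, the claim $\Var(\Phi_t)=O(1/n)$ ``uniformly over $t=O(n\log n)$'' is false under the chain measure: a single changed insertion at time $s$ can ripple forward, and (as in Lemma~\ref{deviationlemma} of the paper) the Doob-martingale increments of any barrier functional grow like $(1+\tfrac1n)^{T-s}$, so $\Var(\Phi_T)$ is of order $n^{2c-1}$ when $T=cn\log n$. It is precisely this exponential blow-up of the fluctuation scale that produces the ``$+1$'' in $c_0=\frac{1}{2(1+a)}$; with the correct decay $e^{-at/n}$ and the correct variance $n^{2c-1}$, Chebyshev gives the same threshold $(a+1)c<\tfrac12$, but via a different mechanism than the one you describe.

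There are two further gaps worth flagging. You start from the identity permutation and assert (``a Laplace-transform argument shows'') that the slowest oscillatory mode is excited; this is genuinely nontrivial and unproved in your sketch, and the paper sidesteps it entirely by running a ``startup round'' that forces $g(t)=f(t/n)$ exactly for $t\le n$, and then invoking the averaging inequality $\|\mu P^t - \unif\|\le\max_x\|\delta_x P^t-\unif\|$ to pass from that specially chosen initial distribution back to a worst-case lower bound. Also, $\Phi_t$ is a function of the augmented state $(\sigma_t,B_t)$, not of $\sigma_t$ alone; to use it as a distinguishing statistic for total variation you need to replace it with a genuine function of the permutation (the paper does this by introducing a fixed cutoff position $\tfrac n2+\tfrac{\beta}{4}n^{1-ac}$ and arguing that, on the high-probability concentration event, the barrier count $N$ is dominated by the position count $Y$). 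Your proposal as written does not address this.
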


\begin{theorem}\label{upperbound}
For any $\epsilon > 0$ and $n\geq 4$, we have
$$t_{\rm mix}(\eps)\leq C(n\log n - 2n \log\epsilon),$$ where $$C={1\over \log2-\log(e-1)}=6.58664655...$$
\end{theorem}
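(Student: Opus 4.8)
The plan is to use the path coupling method of Bubley and Dyer~\cite{BD}, applied to the time-homogeneous chain that performs one round ($n$ consecutive single-card moves) per step. For the metric I would take, on the set of orderings of the deck, $\rho(X,Y)=$ the least number of cards that must be deleted from both decks so that the induced orderings on the surviving cards coincide; equivalently $n-\rho(X,Y)$ is the length of a longest common subsequence of $X$ and $Y$, and $\rho(X,Y)$ is the least number of ``cut one card and reinsert it'' operations carrying $X$ to $Y$. In the graph on orderings whose edges join pairs at $\rho$-distance $1$, two orderings are adjacent precisely when they differ only in the position of a single card; this graph is connected, $\rho\le n-1$ throughout, and the path coupling hypotheses apply.

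Next comes the coupling of a single shuffle step: the card $L=t\bmod n$ to be moved is forced, so the only freedom is in how the two uniform reinsertion positions are coupled. For an adjacent pair $(x,y)$, differing only in the position of one card $c$: if $L=c$, then after deleting $c$ the two decks are \emph{identical}, so reinserting $c$ at the same position makes them coalesce, and we then keep them equal forever; if $L\ne c$, we delete the common card $L$ and reinsert it through a carefully chosen (near-maximal) coupling of the two almost-equal reinsertion laws, so that with probability $1-O(1/n)$ the two decks again differ in only one card's position, and with the remaining probability the discrepancy grows by one. The heart of the argument is to show that, over a full round, the net effect is a constant-factor contraction,
$$\E\bigl[\rho(X_{mn},Y_{mn})\bigr]\ \le\ \beta^{\,m}\,\rho(X_0,Y_0),\qquad \beta<1,$$
with $\beta$ an explicit constant tied to $\tfrac{e-1}{2}$. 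This is where the real work lies: a naive step-by-step tally only gives a factor $1-O(1/n)$ per round, because in a round started at distance $1$ the lone step that moves $c$ deflates the discrepancy while the other $n-1$ steps each inflate it by about $1/n$, and these nearly cancel (consistently with Theorem~\ref{lowerbound}, which forbids coalescence in $o(n\log n)$ steps). To get a genuine constant $\beta$ one couples the neutral steps more cleverly and weights discrepancies appropriately, and then estimates, via a branching-type analysis of how discrepancies are spawned by the neutral reinsertions and removed when the relevant card's turn comes around, that a round multiplies the weighted discrepancy by at most $\tfrac{e-1}{2}$ in expectation — the $e-1$ reflecting the cumulative influence of the $n$ uniform reinsertions — whence $C=1/(\log2-\log(e-1))$.

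Once the contraction is in hand, the path coupling bound gives, for every start, $\|\P_x(X_t\in\cdot)-\U\|\le\E_{x,y}[\rho(X_t,Y_t)]\le\beta^{\lfloor t/n\rfloor}\cdot(n-1)$, with the partial round $t\bmod n$ costing only a bounded factor; setting the right-hand side to be $\le\epsilon$ and solving for $t$ — carrying through $1/\log(1/\beta)$, the $O(n)$ diameter, and the dependence on $\epsilon$ — yields $t_{\rm mix}(\epsilon)\le C(n\log n-2n\log\epsilon)$ with $C=1/(\log2-\log(e-1))$, after which a short direct check covers the small values $n\ge4$.

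I expect the main obstacle to be precisely the middle step: designing the coupling of the reinsertion positions on the neutral steps of a round and choosing the accompanying potential so as to obtain a true constant-factor contraction per round rather than the $1-O(1/n)$ that a crude argument yields. Everything else — verifying the path coupling hypotheses, the diameter bound, passing from multiples of $n$ to general $t$, and pinning down the explicit constant — should be routine once that estimate is established.
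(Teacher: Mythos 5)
Your high-level plan is the one the paper follows — path coupling of Bubley--Dyer applied to the time-homogeneous chain that does one full round per step, with the goal of a constant-factor contraction per round and an $O(n)$ diameter — but the crucial contraction lemma is precisely the thing you do not prove, and you say so yourself. That is a genuine gap, not a detail: Lemma~\ref{onestep} (that for permutations $x,y$ at distance $1$ one can couple so that $\E\rho(\sigma_n^x,\sigma_n^y)\leq e^{-\alpha}$ with $\alpha=2(\log 2-\log(e-1))$) is essentially the entire content of the upper bound, and your sketch stops at a heuristic (``branching-type analysis,'' ``weights discrepancies appropriately,'' a factor ``tied to $(e-1)/2$'') without specifying the coupling or carrying out any estimate.

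There are also two places where your proposed route diverges from the paper's in ways that matter. First, the metric: you take $\rho$ to be insertion distance (one minus longest common subsequence), whereas the paper works with the path metric generated by \emph{adjacent transpositions}. These are not equivalent up to constants, and the adjacent-transposition metric is what makes the paper's bookkeeping work: after the round, the two decks differ by a block of $A_n$ cards sitting on one side of a block of $B_n$ cards versus the other, which costs exactly $A_nB_n$ adjacent transpositions, and it is the \emph{product} $\E[A_nB_n]$ that contracts cleanly by a multiplicative recursion. Under insertion distance the analogous cost is roughly $\min(A_n,B_n)$, and it is far less clear how to control $\E[\min(A_n,B_n)]$. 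Second, the coupling: you propose to couple the two uniform reinsertion positions near-maximally throughout the round, whereas the paper switches coupling schemes mid-round — \emph{position} coupling while moving cards $1,\dots,i-1$ (so the discrepancy stays a single adjacent transposition of $i$ and $j$, possibly with filler cards in between), then \emph{label} coupling from card $i$ onward (so the two ``bad'' blocks grow in a controlled, independent-looking way). This staged design is exactly what turns your worried ``$1-O(1/n)$ per round'' into a genuine constant-factor contraction; the worry you voice is not an artifact of being sloppy, it is a sign that the single uniform coupling you sketch really does not obviously give the contraction, and a different construction is needed. Your diameter bound ($n-1$ versus the paper's $n(n-1)/2<n^2$) is also tied to the choice of metric; either works for the $n\log n$ order, but the explicit constant in the theorem statement is computed for the paper's choice.

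In short: right framework, right target constant, honest about the gap — but the gap is the whole proof, and the ingredients the paper uses to close it (adjacent-transposition metric, position-then-label coupling, the multiplicative recursion for $\E[A_tB_t]$) are different from what you propose and are not recoverable from your sketch.
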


\noindent{{\bf Remark.}}
Theorem \ref{lowerbound} and Theorem \ref{upperbound}
together establish that the \ccr shuffle
has a pre-cutoff in total variation distance. It is an interesting open problem to determine if cutoff occurs in this shuffle. For reference on cutoff phenomenon, see \cite[Chapter 18]{LPW}.

\noindent{{\bf Remark.}}
One can also consider a simpler shuffle where at time $t$, the card in position $t$ mod $n$ is removed and inserted in a uniform random position.
Call this Position-Cyclic-to-Random insertion. The time-reversal of this chain can be obtained by at time $t$, picking a uniform 
card and inserting it to location $t$ mod $n$. Considering the length of the longest increasing subsequence shows $\Omega(n\log n)$ steps are needed to mix(Ross Pinsky, personal communication). A matching upper-bound of $O(n\log n)$ follows from the work of Saloff-Coste and Zuniga. See \cite[Theorem 4.8]{SZ1}.


Theorems \ref{lowerbound} and \ref{upperbound} 
will be proved in Sections \ref{lower} and \ref{upper}, respectively.

\section{Lower bound}
\label{lower}
\subsection{The barrier}
The key idea for the lower bound is to imagine a barrier between two parts of the deck, that moves along with the cards as the shuffling
is performed.
If a card is inserted into the gap that the barrier occupies, we use the convention that the card is inserted
on the same side of the barrier as it was in the previous step.
We illustrate this with the following example. Suppose there is a deck of 8 cards with a barrier between
cards 3 and 5. In the next step, card 7 is inserted between cards 3 and 5.
\[\begin{array}{ccccccccc}
2&1&3&|&5&4&6&8&7\\
2&1&3&|&7&5&4&6&8
\end{array}\]
Let $\{\sigma_t\}_{t=0}^{\infty}$ be a \ccr shuffle. We think of $\sigma_t(i)$ as the position
of card $i$ at time $t$, where the positions range from $1$ at the left to $n$ at the right.
Define the position of the barrier as the position of the card immediately to its left,
and throughout the present chapter, let $B_{t}$ be the position of the barrier at time $t$. Use the convention that $B_t=0$ if at time $t$ the barrier is to the left of all cards.
We will call the pair process $( \sigma_t, B_t)$
the {\it \aux process}.

Note that
the conditional probability that the card at time $t$ is inserted to the left of the barrier, given $B_t$,
is $\sfrac{1}{n}B_t$.
Since at
any time $t > n$, every card has been moved exactly once in the previous
$n$ steps, we have
$$B_t = \sum_{i = 1}^n
\1(\mbox{the card moved at time $t-i$ is inserted to the left of barrier}),$$
and hence
\be
\e(B_t) = \frac{1}{n} \sum_{i=1}^n \e(B_{t - i}).
\ee
Define
$g(t) = \e(\sfrac{1}{n}B_t)$. Then $g$ satisfies
the following {\it moving average} condition:
\be
g(t) = \frac{1}{n} \sum_{i=1}^n g(t - i),
\ee
for $t > n$.
We shall approximate $g(t)$ by
$f(t/n)$, where $f:\R \to [0,1]$ is a continuous function
satisfying (\ref{ma3}).
Our first lemma gives an example of such a function.
\begin{figure}[!ht]
\centering
\includegraphics[width=10cm]{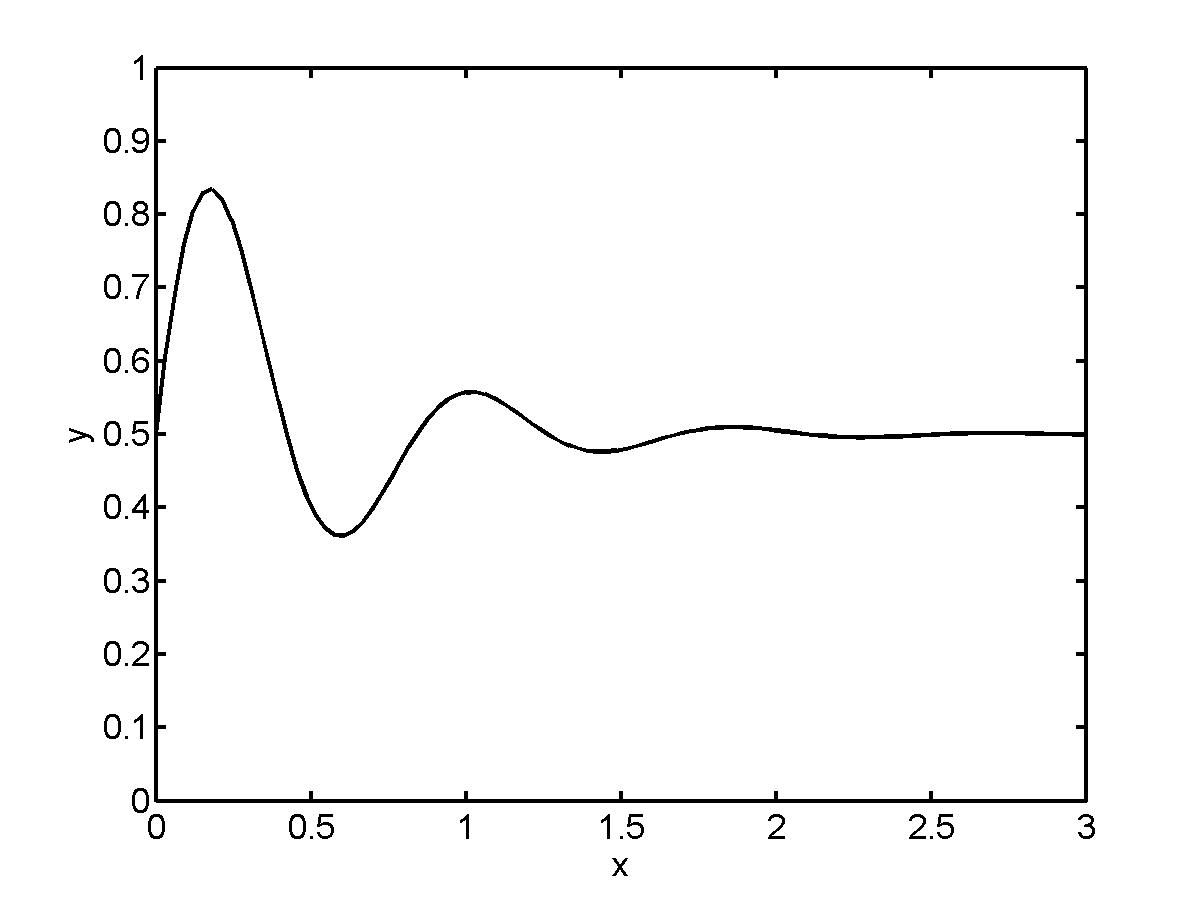}
\caption{Graph of f(s)}
\end{figure}
\begin{lemma}
\label{exists}
There exists $a>0$ and $b > 2\pi$
such that $f(x)= \half + \half e^{-ax}\sin(bx)$ satisfies
\be\label{relation1}f'(x)= f(x) - f(x-1).\ee
Moreover,
\be\label{relation2}f(x)=\int_{x-1}^xf(s)ds,\ee
for all $x$.
\end{lemma}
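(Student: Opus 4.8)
The plan is to find the right constants $a, b$ by substituting the ansatz $f(x) = \half + \half e^{-ax}\sin(bx)$ into the delay-differential equation (\ref{relation1}) and reducing it to a pair of transcendental equations, then verify that these same constants make the integral identity (\ref{relation2}) hold, and finally check the stated inequalities $a > 0$, $b > 2\pi$.

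First I would plug the ansatz into (\ref{relation1}). Since the constant $\half$ contributes $0$ to both sides, it suffices to work with $h(x) = e^{-ax}\sin(bx)$ and require $h'(x) = h(x) - h(x-1)$. Writing $h(x) = \mathrm{Im}\, e^{(-a+ib)x}$ and setting $z = -a + ib$, the equation $h'(x) = h(x) - h(x-1)$ for all $x$ is equivalent to the single complex equation $z = 1 - e^{-z}$ (taking imaginary parts of $z e^{zx} = e^{zx} - e^{z(x-1)} = e^{zx}(1 - e^{-z})$, and noting the real-part equation is automatically compatible since we may absorb a real scalar — more carefully, one checks that the function $\half + \half h(x)$ solves the real equation exactly when $z = 1 - e^{-z}$). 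Separating $z = -a + ib$ into real and imaginary parts, $e^{-z} = e^{a}e^{-ib} = e^a(\cos b - i \sin b)$, so $1 - e^{-z} = 1 - e^a\cos b + i\, e^a \sin b$. Matching with $z = -a + ib$ gives exactly the system $-a = 1 - e^a \cos b$ and $b = e^a \sin b$, i.e. $a = e^a\cos b - 1$ and $b = e^a \sin b$ — the same equations appearing in Theorem \ref{lowerbound}. I would then argue that this system has a solution with $a > 0$ and $b > 2\pi$: one shows via an intermediate-value / winding-number argument (counting zeros of $z - 1 + e^{-z}$ in the left half-plane, or directly studying the curve $b \mapsto$ the two sides) that besides the trivial root $z = 0$ there is a root with positive real part $a$ and imaginary part in the range $(2\pi, \tfrac{5\pi}{2})$ or similar; concretely the smallest positive solution already referenced numerically ($a \approx 0.1618\ldots$) has the companion $b$ exceeding $2\pi$, which one pins down by evaluating the equations at $b = 2\pi$ and at a slightly larger value and invoking continuity.

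Next I would derive (\ref{relation2}) from (\ref{relation1}). Observe that if $F(x) = \int_{x-1}^x f(s)\,ds$ then $F'(x) = f(x) - f(x-1) = f'(x)$ by (\ref{relation1}), so $f(x) - F(x)$ is constant. To show the constant is zero, evaluate $F$ on the part of $f$ that integrates cleanly: $\int_{x-1}^x \half\,ds = \half$, and $\int_{x-1}^x \half e^{-as}\sin(bs)\,ds$ can be computed in closed form; using the relations $a = e^a\cos b - 1$, $b = e^a\sin b$ (equivalently $e^{-z}(1-e^z)\cdots$, i.e. the identity $z = 1 - e^{-z}$) one checks the antiderivative evaluated between $x-1$ and $x$ collapses to exactly $\half e^{-ax}\sin(bx)$. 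Hence $F(x) = f(x)$ identically, giving (\ref{relation2}).

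The main obstacle I expect is not the algebraic substitution — that is routine once one passes to the complex variable $z = -a + ib$ and recognizes the characteristic equation $z = 1 - e^{-z}$ — but rather the existence claim: showing the transcendental system genuinely has a root with $a > 0$ and $b > 2\pi$ (as opposed to only the degenerate root $z=0$ or roots with $b \le 2\pi$). This requires a careful sign analysis of the two curves $b = e^a \sin b$ and $a = e^a \cos b - 1$ in the $(a,b)$ plane, or equivalently a contour/argument-principle count showing $z - 1 + e^{-z}$ has a zero in the strip $\{\mathrm{Re}\, z > 0,\ 2\pi < \mathrm{Im}\, z < \tfrac{5\pi}{2}\}$. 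One clean route: fix the relation $e^a = b/\sin b$ (valid where $\sin b > 0$), substitute into $a = e^a \cos b - 1$ to get a single equation $\log(b/\sin b) = b\cot b - 1$ in $b$ alone, and show by evaluating at $b \downarrow 2\pi$ (where the left side $\to +\infty$ and the right side is finite) and at $b$ slightly past $2\pi$ in $(2\pi, 5\pi/2)$ that a crossing occurs; the corresponding $a = \log(b/\sin b)$ is then automatically positive since $b/\sin b > 1$ there. The remainder is bookkeeping.
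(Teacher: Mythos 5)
Your overall strategy matches the paper's: reduce to $h(x) = e^{-ax}\sin(bx)$, derive the transcendental system $b = e^a\sin b$, $a = e^a\cos b - 1$, locate a root by the intermediate value theorem in $(2\pi, 5\pi/2)$, and then deduce (\ref{relation2}) from (\ref{relation1}) by noting that $f(x) - \int_{x-1}^x f(s)\,ds$ is constant. The complex-exponential repackaging (characteristic equation $z = 1 - e^{-z}$ with $z = -a + ib$) is a clean way to organize the computation, but it yields exactly the same pair of real equations that the paper gets by expanding $h(x) - h(x-1)$ trigonometrically.

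Two details in your sketch need repair. First, the endpoint analysis for the IVT is wrong as stated: at $b \downarrow 2\pi$ the quantity $b\cot b - 1$ is \emph{not} finite --- since $\cos b \to 1$ and $\sin b \to 0^+$, it diverges to $+\infty$, just as $\log(b/\sin b)$ does. (The divergence of $b\cot b$ is in fact faster, so a crossing still exists, but not for the reason you give.) The paper sidesteps the singularity by evaluating at the interior points $b = 2\pi + \pi/4$ and $b = 2\pi + \pi/2$, where both sides are finite and a sign change is manifest. Second, the numerical value you quote, $a \approx 0.1618$, is actually $c_0 = 1/(2+2a)$; the root itself is $a \approx 2.0888$.

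For (\ref{relation2}), after establishing that $f - F$ is constant, the paper finishes more quickly than your proposed closed-form integration: since $a > 0$, both $e^{-ax}\sin(bx)$ and its unit-window moving average tend to $0$ as $x \to \infty$, so the constant must be $0$. Your direct antiderivative computation would also work but involves more bookkeeping (and relies on the same relations $a = e^a\cos b - 1$, $b = e^a\sin b$ to make the boundary terms collapse), so the decay argument is preferable.
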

\begin{proof}
Since properties (\ref{relation1}) and
(\ref{relation2}) are preserved under shifting and scaling,
it is enough to show that they apply
to
$h(x) = e^{-ax} \sin(bx)$,
for suitable $a$ and $b$.

First, we show that
for suitable choice of $a$ and $b$ we have
$h'(x) = h(x) - h(x-1)$.
By the product rule, \be\label{equation1}h'(x)=-ae^{-ax}\sin(bx)+be^{-ax}\cos(bx),\ee and
a calculation shows that
\be\label{equation2}h(x)-h(x-1)=(1-e^{a}\cos b)e^{-ax}\sin(bx)+(e^{a}\sin b) e^{-ax}\cos(bx).\ee
The quantities (\ref{equation1}) and (\ref{equation2}) are equal if
$b=e^{a}\sin b$ and $-a=1-e^{a}\cos b$. Solving for $a$ in the first equation gives
$$a=\log{b\over \sin b},$$ and substituting this into the second one gives
$$\log{\sin b\over b}=1-{b\cos b\over \sin b}.$$
By the intermediate value theorem,
this equation has a solution with $b$
in the interval $[2 \pi + \sfrac{\pi}{4}, 2 \pi + \sfrac{\pi}{2}]$, since when $b = 2\pi + \sfrac{\pi}{4}$
the right-hand side is smaller than the left-hand side, but when
$b = 2\pi + \sfrac{\pi}{2}$ the right-hand side is larger.
Furthermore, since
$\sin b < b$ when $b > 0$, we have $a = \log{ b \over\sin b} > 0$.
(Numerical approximation gives the solution as
$b=7.4615...$ and $a=2.0888...$.)

Next we claim that since $h'(x) = h(x) - h(x-1)$, we must have $h(x) = \int_{x-1}^x h(s) ds$.
To see this,
define $\htt(x)=\int_{x-1}^xh(s)ds$ and note that $\htt'(x)=h'(x).$ This implies that $h(x)-\htt(x)=C$ for a constant $C$.
But since $a>0$, we have $h(x)\rightarrow0$ as $x\rightarrow\infty$. Consequently
$\htt(x)\rightarrow0$ as $x\rightarrow\infty$ by the definition of $\htt$, and so $C=0$.
\end{proof}
Recall that $g(t) = \e( \sfrac{1}{n} B_t)$, where $B_t$ is the position of the barrier at time
$t$. A key part of our proof will be to show that $g$ closely follows the continuous
function $f$ of Lemma \ref{exists}. However, in order for this to be the case we must start
with a permutation chosen from a certain probability distribution.
It is most convenient to describe this starting permutation as being generated
in the first $n$ time steps, which we call the {\it startup round.}
In the startup round, we begin with only a barrier.
At time $t$, for
$1 \leq t \leq n$, we put card $t$ to the left of the barrier with probability
$f(\sfrac{t}{n})$. The location among the already existing cards in the left (right) side of the barrier is arbitrary. We must modify the definition of $g$ to handle the startup round. Define
$g: \{1, 2, \dots\} \to \R$ by
\[
g(t) = \left\{\begin{array}{ll}
f(\sfrac{t}{n}) & \mbox{if $1 \leq t \leq n$;} \\
\e(\sfrac{1}{n} B_t) & \mbox{otherwise.} \\
\end{array}
\right.
\]
Thus $g$ satisfies the moving average condition, and, because
of the insertion probabilities used in the startup round,
$g$ matches $f$ for the first $n$ steps.
(That is,
$g(\cdot) =
f(\sfrac{\cdot}{n})$ on $\{1, \cdots, n\}$.)
As we show below,
this is enough to ensure that $g$ is well-approximated by $f$
for a number of rounds on the order of $\log n$.

\begin{lemma}\label{comparisonfg}
There exists a constant $C>0$ such that
$$\left|g(t)-f(\ofrac{t}{n}) \right|\leq \ofrac{C}{2n} e^{2 (t+1)/n},$$
for all $t > 0$.
\end{lemma}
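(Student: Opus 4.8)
The plan is to study the difference $D(t) := g(t) - f(t/n)$ and show it grows at most exponentially at the stated rate. The key observation is that $g$ satisfies the discrete moving-average recursion $g(t) = \frac1n\sum_{i=1}^n g(t-i)$ for $t>n$, while $f$ satisfies the integral identity (\ref{relation2}), i.e. $f(x) = \int_{x-1}^x f(s)\,ds$. If I define $\phi(t) := f(t/n)$, then $\phi$ satisfies a continuous analogue of the moving-average condition, and the plan is to control the \emph{discrepancy} between the Riemann-type sum $\frac1n\sum_{i=1}^n f((t-i)/n)$ and the integral $\int_{t/n - 1}^{t/n} f(s)\,ds = f(t/n)$. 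So I would write, for $t > n$,
\[
D(t) = \frac1n\sum_{i=1}^n D(t-i) + \rho(t),
\]
where $\rho(t) := \frac1n\sum_{i=1}^n f\!\left(\tfrac{t-i}{n}\right) - \int_{t/n-1}^{t/n} f(s)\,ds$ is the quadrature error. The initial condition is $D(t) = 0$ for $1 \le t \le n$, by construction of the startup round.

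First I would bound $|\rho(t)|$. Since $f(x) = \half + \half e^{-ax}\sin(bx)$ is smooth with bounded first and second derivatives (indeed $\|f'\|_\infty, \|f''\|_\infty$ are absolute constants because $a,b$ are fixed), a standard midpoint/rectangle quadrature estimate gives $|\rho(t)| \le C_1/n^2$ for an absolute constant $C_1$; I'd get this by comparing $\frac1n f((t-i)/n)$ with $\int_{(t-i)/n}^{(t-i+1)/n} f(s)\,ds$ on each of the $n$ subintervals and summing the per-interval errors of size $O(1/n^3)$. Then I would set up an induction on $t$: assuming $|D(s)| \le \frac{C}{2n} e^{2(s+1)/n}$ for all $s < t$, the recursion gives
\[
|D(t)| \le \frac1n\sum_{i=1}^n |D(t-i)| + |\rho(t)| \le \frac{C}{2n}\cdot\frac1n\sum_{i=1}^n e^{2(t-i+1)/n} + \frac{C_1}{n^2}.
\]
The geometric sum $\frac1n\sum_{i=1}^n e^{-2i/n}$ is at most $\frac1n \cdot \frac{1}{1 - e^{-2/n}} \le \frac12$ (using $1 - e^{-2/n} \ge \frac{2}{n}\cdot e^{-2/n} \cdot$ something, or more crudely $1-e^{-x}\ge x/2$ for small $x$ wait — I should use $1 - e^{-2/n} \ge \frac{2}{n} - \frac{2}{n^2} \ge \frac{1}{n}$ for $n\ge 2$), so that term is bounded by $\frac{C}{2n} e^{2(t+1)/n} \cdot \kappa$ with $\kappa \le \frac12$. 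Choosing $C$ large enough (relative to $C_1$ and the slack $\frac12$) absorbs the $\frac{C_1}{n^2}$ term, since $e^{2(t+1)/n}/(2n) \ge 1/(2n)$ and $e^{2/n} > 1$ gives room; this closes the induction.

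The main obstacle is getting the constants to line up so the induction actually closes — specifically, verifying that the contraction factor $\frac1n\sum_{i=1}^n e^{-2i/n}$ is bounded by something strictly less than $1$ with enough margin to swallow the quadrature error $C_1/n^2$ uniformly in $n$ and $t$, and checking the base case $n < t \le 2n$ where the sum $\sum_{i=1}^n D(t-i)$ still includes zero terms from the startup round (which only helps). I would be slightly careful that the exponent in the claim is $2(t+1)/n$ rather than $2t/n$; the extra $+1$ is exactly the slack needed so that the bound at step $t$ dominates $e^{2/n}$ times the averaged bound over steps $t-1,\dots,t-n$. Everything else — the derivative bounds on $f$, the Taylor expansion for the quadrature error — is routine calculus with the explicit $f$ from Lemma \ref{exists}.
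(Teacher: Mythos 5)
Your proof takes a genuinely different route from the paper. The paper first \emph{differences} the moving-average condition, obtaining the two-term recursion $g(t+1)=(1+\frac1n)g(t)-\frac1n g(t-n)$, matches it against the Taylor expansion of $f$ (giving a per-step error of $O(1/n^2)$), and then inducts on the bound $|g(t)-f(t/n)|\le \frac{C}{n^2}\sum_{i=0}^{t}(1+\frac2n)^i$, finishing by summing the geometric series. You instead keep the full $n$-term average, express $D(t)=g(t)-f(t/n)$ as $\frac1n\sum_{i=1}^n D(t-i)+\rho(t)$ with $\rho$ a quadrature error, and run the induction directly on the target bound $\frac{C}{2n}e^{2(t+1)/n}$, using that the weight $\kappa_n=\frac1n\sum_{i=1}^n e^{-2i/n}$ is a contraction. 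Both are Gronwall-style arguments and both work; the paper's differencing trick buys a smaller per-step error ($O(1/n^2)$ vs.\ $O(1/n)$), while your version avoids Taylor expansion but needs the contraction factor to be bounded away from $1$.

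However, your quadrature estimate as written is wrong. The sum $\frac1n\sum_{i=1}^n f(\frac{t-i}{n})$ is a \emph{left-endpoint Riemann sum} of $\int_{t/n-1}^{t/n}f$, not a midpoint sum; the per-subinterval error is $\frac12 f'\cdot h^2+O(h^3)=O(1/n^2)$, not $O(1/n^3)$, and the total error $\rho(t)$ is only $O(1/n)$ (for example, with $f(s)=s$ one gets exactly $-\frac{1}{2n}$). The signed per-interval contributions $\frac{h^2}{2}f'$ telescope only if $f$ is periodic on the window, which it is not. Fortunately this does not sink the argument: since the target bound $\frac{C}{2n}e^{2(t+1)/n}$ is itself of order $1/n$, the absorption step $\frac{C_1}{n}\le(1-\kappa_n)\frac{C}{2n}e^{2(t+1)/n}$ still closes once $C$ is chosen large compared to $C_1$ (using $e^{2(t+1)/n}\ge 1$, or even $\ge e^2$ for $t>n$). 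You should replace the $O(1/n^2)$ claim by the correct $O(1/n)$ and re-run the closing step; the conclusion survives. Separately, your justification of $\kappa_n\le\frac12$ is garbled (the chain $1-e^{-2/n}\ge\frac1n$ only gives $\kappa_n\le 1$); the clean argument is that $\kappa_n$ is a right-endpoint Riemann sum of the decreasing function $e^{-2x}$ on $[0,1]$, hence $\kappa_n\le\int_0^1 e^{-2x}\,dx=\frac{1-e^{-2}}{2}<\frac12$.
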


\begin{proof}
First, note that if $t > n$ then
\begin{eqnarray*}
g(t+1) - g(t) &=& \ofrac{1}{n} \sum_{i=1}^n g(t+1-i) -
\ofrac{1}{n} \sum_{i=1}^n g(t-i) \\
&=& \ofrac{1}{n} \Bigl(g(t) - g(t-n)\Bigr).
\end{eqnarray*}
Rearranging terms gives
\be
\label{one}
g(t+1) = (1 + \ofrac{1}{n}) g(t) - \ofrac{1}{n} g(t-n).
\ee
Recall that $f(x) = \half + \half e^{-ax} \sin(bx)$ and $a > 0$.
Some calculus shows that the second derivative of $f$
is uniformly bounded on $[0, \infty)$. Hence
\begin{eqnarray*}
f(\ofrac{t+1}{n}) - f(\ofrac{t}{n}) &=& \ofrac{1}{n} f'(\ofrac{t}{n}) + O(\ofrac{1}{n^2}) \\
&=&
\ofrac{1}{n} \Bigl(f(\ofrac{t}{n}) -  f(\ofrac{t}{n} - 1) \Bigr)
+ O(\ofrac{1}{n^2}),
\end{eqnarray*}
where the first line follows from Taylor's theorem and the second line
follows from Lemma \ref{exists}. Rearranging terms gives
\be
\label{two}
f(\ofrac{t+1}{n}) =
(1 + \ofrac{1}{n})  f(\ofrac{t}{n}) -
\ofrac{1}{n}  f(\ofrac{t-n}{n})
+ O(\ofrac{1}{n^2}).
\ee
Combining (\ref{one}) and (\ref{two}) and using the triangle inequality gives
\be
\label{growing}
\left| g(t+1) - f(\ofrac{t+1}{n}) \right| \leq (1 + \ofrac{1}{n})
\left|g(t) - f(\ofrac{t}{n}) \right|
+ \ofrac{1}{n}
\left|g(t-n) - f(\ofrac{t-n}{n}) \right| + \ofrac{C}{n^2},
\ee
for a universal constant $C$. We claim that for all $t$ we have
\be
\label{almost}
\left| g(t) - f(\ofrac{t}{n}) \right| \leq \ofrac{C}{n^2}
\sum_{i=0}^t \left(1 + \ofrac{2}{n}\right)^i .
\ee
We prove this by induction. For the base case, note that
$g(t) = f(\ofrac{t}{n})$  for $t = 1, \dots, n$. Now if we suppose that (\ref{almost})
holds for $1,\dots, t$,
then the two absolute values on the right-hand side of
(\ref{growing}) can be bounded by $\ofrac{C}{n^2} \sum_{i=0}^t (1 + \ofrac{2}{n})^i$.
Hence
\begin{eqnarray*}
\left| g(t+1) - f(\ofrac{t+1}{n}) \right| &\leq& (1 + \ofrac{2}{n})
\Bigl[\ofrac{C}{n^2} \sum_{i=0}^t (1 + \ofrac{2}{n})^i \Bigr]
+ \ofrac{C}{n^2} \\
&=& \ofrac{C}{n^2} \sum_{i=0}^{t+1} (1 + \ofrac{2}{n})^i,
\end{eqnarray*}
which verifies (\ref{almost}) for $t+1$.
To finish the proof of the lemma, note that
\begin{eqnarray*}
\label{alf}
\ofrac{C}{n^2} \sum_{i=0}^t (1 + \ofrac{2}{n})^i  &=&
\ofrac{C}{n^2}  \,\, {
(1 + \ofrac{2}{n})^{t+1} - 1 \over \ofrac{2}{n}} \\
&\leq&  \ofrac{C}{2n} e^{2(t+1)/n}.
\end{eqnarray*}

\end{proof}

\subsection{Deviation estimates}
In the previous subsection we proved that the
expected barrier location
is well-approximated by a continuous function.
In the present subsection we show that the barrier
stays reasonably close to its expectation with high probability
when the number of rounds is on the order of $\log n$.

Define a {\it configuration} as a pair $(\sigma, b)$,
where $\sigma$ is a permutation and $b$ is a barrier location.
(Thus the state space of the \aux process is the set of
all configurations.)
We define the
{\it insertion distance} between two configurations as the minimum number of
cards we would need to remove and re-insert to get
from one configuration to the other. For example the insertion distance
between the two configurations below is $2$. (Move cards 4 and 7.)
\[\begin{array}{ccccccccc}
2&1&4&3&|&5&6&8&7\\
2&1&3&7&|&5&4&6&8
\end{array}\]
\begin{lemma}\label{deviationlemma}
Let $(\sigma^1_t, B^1_t)$ and $(\sigma^2_t, B^2_t)$ be
\aux processes, and define
$\siigma^i_t =
(\sigma^i_t, B^i_t)$ for $i=1,2$.
Let $d$ be the insertion distance between $\siigma^1_0$ and $\siigma^2_0$.
Then
$$|\E B^1_{t}-\E B^2_{t}|\leq d \left(1+ \ofrac{1}{n}\right)^t.$$
\end{lemma}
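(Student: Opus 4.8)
The plan is to bound $\E\,\dist(\siigma^1_t,\siigma^2_t)$ for a well-chosen coupling of the two \aux processes, where $\dist$ denotes the insertion distance. The lemma then follows at once: moving a single card changes the number of cards to the left of the barrier by at most one, so $|B^1_t-B^2_t|\le\dist(\siigma^1_t,\siigma^2_t)$ for every realization, whence $|\E B^1_t-\E B^2_t|\le\E|B^1_t-B^2_t|\le\E\,\dist(\siigma^1_t,\siigma^2_t)$.

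To build the coupling I would use path coupling (Bubley--Dyer \cite{BD}) with the insertion metric. It suffices to treat a pair of configurations at insertion distance exactly $1$ and exhibit, for each step index $s$, a coupling of one step of the \aux process under which the expected insertion distance afterwards is at most $1+1/n$; composing these couplings along geodesics in the insertion metric then yields $\E[\dist(\siigma^1_{t+1},\siigma^2_{t+1})\mid\mathcal{F}_t]\le(1+1/n)\,\dist(\siigma^1_t,\siigma^2_t)$, and hence $\E\,\dist(\siigma^1_t,\siigma^2_t)\le d(1+1/n)^t$ by iteration.

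For the one-step coupling at distance $1$: the two configurations agree except for the position of a single card $c$, and the barrier occupies the same gap relative to the other $n-1$ cards (a single insertion never moves the barrier relative to the cards it does not touch; only the count of cards to its left may differ, by one). At step $s$ both processes remove card $j=s\bmod n$. If $j=c$, the two configurations coincide after the removal, and reinserting $c$ at a common uniform location keeps them equal, so the distance drops to $0$. If $j\ne c$, then $j$ sits in the same place in both decks, so after removing it the two decks still differ only in the location of $c$; say $c$ lies in coarse gap $r_1$ of deck~$1$ and coarse gap $r_2\ne r_1$ of deck~$2$, where the coarse gaps are the $n-1$ slots among the $n-2$ cards that are neither $c$ nor $j$. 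I would couple the two uniform reinsertions of $j$ thus: the $n-3$ coarse gaps distinct from both $r_1$ and $r_2$ are matched identically and leave the distance equal to $1$; coarse gap $r_1$ of deck~$2$ is matched with the slot immediately right of $c$ in deck~$1$, and symmetrically coarse gap $r_2$ of deck~$1$ with the slot immediately right of $c$ in deck~$2$, each still leaving the distance equal to $1$ (only $c$ has moved); and the two remaining slots, immediately left of $c$ in deck~$1$ and immediately left of $c$ in deck~$2$, are matched to each other, after which $j$ lies just left of $c$ in both decks but in different gaps, so the distance is at most $2$. This is a bona fide coupling of the two uniform distributions (each deck's $n$ slots are used with total probability $1$), and the expected distance afterwards is at most $1\cdot\frac{n-1}{n}+2\cdot\frac1n=1+\frac1n$. (Whenever a reinserted card would land in the barrier's own gap, the tie-breaking convention is applied to the same card on the same side in both decks, so it does not affect the count.)

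I expect the heart of the argument to be the bookkeeping in the $j\ne c$ case: confirming that the proposed matching really is a valid coupling, that in the unfavorable event the insertion distance rises to at most $2$ and no more, and that ``the two configurations differ in the position of exactly one card'' (together with the coincidence of the barrier's relative position) persists as an invariant under the coupling, including the tie-breaking convention.
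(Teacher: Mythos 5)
Your proof is correct, but it takes a genuinely different route from the paper. Both arguments reduce the lemma to bounding $\E\,\dist(\siigma^1_t,\siigma^2_t)$ under a coupling and then invoking $|B^1_t-B^2_t|\le\dist(\siigma^1_t,\siigma^2_t)$, which holds because each card move changes $B$ by at most one. The difference is in how the expected insertion distance is controlled. The paper couples the two processes directly via \emph{label coupling} (choose a uniform label $X$ and insert the current card immediately to the right of $X$ in both decks, or at the left end if $X$ equals the moved card), and at distance $d$ it tracks a set $A$ of $d$ cards whose repositioning takes one configuration to the other: under label coupling $|A|$ can only increase when the moved card lands to the right of a card in $A$, which has probability $|A|/n$, so $\E[\,|A_{t+1}|\,]\le(1+\tfrac1n)\E[\,|A_t|\,]$, and iterating gives $\E\,\dist\le d(1+\tfrac1n)^t$ in one shot. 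You instead reduce to distance-$1$ pairs, build a bespoke slot-matching coupling that gives one-step expectation $\le 1+\tfrac1n$, and then appeal to path coupling to lift this to all pairs and iterate. Your slot-matching coupling is not the paper's label coupling (matching ``coarse gap $r_1$ in deck $2$'' with ``right-of-$c$ in deck $1$'' is a genuinely different bijection of slots), but the computation $\frac{n-1}{n}\cdot 1+\frac1n\cdot 2=1+\tfrac1n$ checks out, and label coupling restricted to distance-$1$ pairs would give the same constant.

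Two small points worth making explicit if you write this up. First, you are applying path coupling in the \emph{expansion} regime ($\kappa=1+\tfrac1n>1$), whereas the statement of Theorem \ref{pathcoupling} is for contraction; what you really use is the underlying Wasserstein--Lipschitz lemma (one-step Wasserstein bound on edges, triangle inequality along geodesics in a path metric, then composition over time steps), which holds for any $\kappa$, but it is worth saying so rather than citing the contraction theorem verbatim. Second, your worry about the barrier tie-breaking is legitimate but resolves cleanly: the barrier occupies a fixed gap among the $n-2$ cards other than $c$ and $j$, and that gap is the same in both decks; whether $j$ lands left or right of the barrier affects only $B$, not $\sigma$, and in every branch of your coupling the resulting configurations still differ only in the positions of at most $c$ and $j$, so the insertion distance (which already accounts for $B$) is $1$ or $2$ exactly as you claim. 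The paper avoids this bookkeeping entirely by working with label coupling and the set $A$, which is the main economy of its argument.
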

\begin{proof}
There is
a natural coupling of
$\sigma^1_t$ and $\sigma^2_t$ that we call {\it label coupling.}
In label coupling, at time $t$ we choose a label $X$ uniformly at random.
If $X = t \bmod n$, then we move card $t \bmod n$ to the leftmost position
in both processes. Otherwise, we insert card $t \bmod n$ to
the right of the card with label $X$ in both processes.

Suppose that $A = \{a_1, \dots, a_d\}$ is a minimal set of cards that
can be moved to get from
$\siigma^1_0$ to $\siigma^2_0$.
Note that under the label coupling,
only in the case when we move a card not in $A$ can the insertion distance be increased.
In such moves, if the card is put to the right of a card in
$A$, the insertion distance increases by $1$ and otherwise it stays the same.
Thus the expected insertion distance after one step is at most
\begin{eqnarray*}
(d+1){d \over n}+ d \,{n-d \over n}
=d \left(1+{1\over n}\right).
\end{eqnarray*}
Iterating this argument shows that the expected insertion distance after $t$ steps
is at most
$d \left(1+{1\over n}\right)^t$.
The lemma follows from this, since the barrier can move by at most one position
with each re-insertion.
\end{proof}
We are now ready to state the main lemma of this subsection.
\begin{lemma}\label{deviation}
Let $(\sigma_t, B_t)$ be an \aux process.
Fix $c >0$ and suppose $T$ satisfies $n < T \leq c n \log n$. Then
for any $x>0$ we have
$$\P(\left|\ofrac{1}{n}B_{T}-g(T) \right|
\geq x)
\leq 2\exp\left(- x^2 n^{1-2c}\right).$$
\end{lemma}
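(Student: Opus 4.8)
The plan is to prove the concentration bound in Lemma~\ref{deviation} via a martingale (Azuma--Hoeffding) argument applied to the \aux process over $T$ steps, where the natural filtration is the step-by-step history of the shuffle (including the startup round). The function $g$ is, by construction, exactly $\E(\sfrac1n B_T)$ (for $T>n$), so we are asking how far $\sfrac1n B_T$ deviates from its mean. The standard tool is the Doob martingale $M_s = \E(\sfrac1n B_T \mid \mathcal{F}_s)$, $0\le s\le T$, whose increments we must control. The key point is that a single shuffle step at time $s$ changes the configuration by one card re-insertion, hence changes the future expectation of $B_T$ by a bounded amount --- and Lemma~\ref{deviationlemma} tells us exactly how a unit perturbation in the initial configuration propagates: an insertion-distance-$1$ discrepancy at time $s$ leads to $|\E B^1_T - \E B^2_T|\le (1+\sfrac1n)^{T-s}$ at time $T$.

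First I would set up the coupling underlying the martingale increment bound. For a fixed step $s$, condition on $\mathcal{F}_{s-1}$ and let $\siigma_s$ and $\siigma'_s$ be two possible states of the \aux process at time $s$, obtained by making the two different choices of insertion location for card $s\bmod n$ (all other randomness coupled identically). These two configurations differ by at most one re-insertion, so their insertion distance is at most $1$ (in fact I should be slightly careful: moving a card to two different positions is insertion distance at most $1$ since one re-insertion of that card converts either to the other). By Lemma~\ref{deviationlemma}, the conditional expectations of $B_T$ starting from these two states differ by at most $(1+\sfrac1n)^{T-s}$. Averaging appropriately, this shows $|M_s - M_{s-1}| \le \sfrac1n (1+\sfrac1n)^{T-s} =: c_s$. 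I should double-check the startup-round steps are handled by the same estimate: in the startup round card $t$ is placed left/right of the barrier with probability $f(t/n)$, again a single-card choice, so the same one-re-insertion comparison applies, and $g$ was defined so that $g(T)=\E(\sfrac1n B_T)$ across the whole process including startup.

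Next I would apply Azuma--Hoeffding: $\P(|M_T - M_0|\ge x) \le 2\exp\!\big(-x^2 / (2\sum_{s=1}^T c_s^2)\big)$, with $M_T = \sfrac1n B_T$ and $M_0 = \E(\sfrac1n B_T) = g(T)$. It remains to estimate $\sum_{s=1}^T c_s^2 = \sfrac{1}{n^2}\sum_{s=1}^T (1+\sfrac1n)^{2(T-s)} = \sfrac{1}{n^2}\sum_{j=0}^{T-1}(1+\sfrac1n)^{2j}$. This geometric sum is at most $\sfrac{1}{n^2}\cdot \sfrac{(1+1/n)^{2T}}{(1+1/n)^2-1} \le \sfrac{1}{n^2}\cdot \sfrac{n}{2}\,(1+\sfrac1n)^{2T} \le \sfrac{1}{2n} e^{2T/n}$. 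Using the hypothesis $T \le cn\log n$ gives $e^{2T/n} \le n^{2c}$, hence $2\sum c_s^2 \le n^{2c-1}$. Plugging in yields $\P(|\sfrac1n B_T - g(T)|\ge x) \le 2\exp(-x^2 n^{1-2c})$, as claimed. (The dependence on the lower bound $T>n$ is only used to ensure $g(T)=\E(\sfrac1n B_T)$; the upper bound $T\le cn\log n$ is what controls the martingale's total variance.)

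The main obstacle is getting the martingale increment bound $|M_s - M_{s-1}|\le \sfrac1n(1+\sfrac1n)^{T-s}$ cleanly, i.e.\ correctly reducing a change in the randomness at step $s$ to an insertion-distance-$1$ perturbation of the \aux configuration and then invoking Lemma~\ref{deviationlemma} in the conditional sense. One has to be a little careful that re-inserting the same card in two different positions is insertion distance $\le 1$ (not $2$), and that the barrier-side convention when a card lands in the barrier gap does not break this; once that is granted, the rest is the routine geometric-series bookkeeping above. A secondary point is making sure the same increment bound holds uniformly for the startup-round steps $1\le s\le n$ and the ordinary steps $s>n$, which it does because each is a single-card operation and Lemma~\ref{deviationlemma}'s coupling argument (only moving a card \emph{not} in the minimal set can increase the distance, by at most $1$) goes through verbatim.
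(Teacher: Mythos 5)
Your proof is correct and takes essentially the same route as the paper: define the Doob martingale for $B_T$, bound its increments via Lemma~\ref{deviationlemma} (one insertion per step), apply Azuma--Hoeffding, and estimate the resulting geometric sum using $T \le cn\log n$. The only cosmetic difference is that you normalize by $\sfrac{1}{n}$ from the outset, whereas the paper works with $B_T$ directly and rescales at the end; the bookkeeping is otherwise identical.
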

\begin{proof}
Fix $T$ with $n < T \leq c n \log n$.
Since $g(T) = \sfrac{1}{n} \e(B_T)$, it is enough to show that
for any $x > 0$ we have
$$\P(\left|B_{T}- \e(B_T) \right|\geq x)
\leq 2\exp\left(- x^2 n^{-(1+2c)}\right).$$
Let $\f_t$ be the sigma-field generated by the process up to time $t$,
and consider the
Doob martingale
\[
M_t := \e ( B_T \given \f_t).
\]
Applying Lemma \ref{deviationlemma} to the case of
two configurations that differ by one insertion gives
\[
| M_t - M_{t-1} | \leq \left( 1 + \ofrac{1}{n} \right)^{T - t},
\]
for $t$ with $1 \leq t \leq T$. Thus the Azuma-Hoeffding bound gives
\begin{eqnarray}
\P( | B_T - \e(B_T) | \geq x ) &=&
\P( | M_T - \e(M_T) | \geq x ) \nonumber\\
\label{sum}
&\leq& 2 \exp \left( {-x^2 \over 2\sum_{t=1}^T b_t^2 } \right),
\end{eqnarray}
where $b_t = \left(1 + \ofrac{1}{n} \right)^{T-t}$.
Let $r = \left(1 + \ofrac{1}{n} \right)^2$. The sum in (\ref{sum})
can be written as
\begin{eqnarray}
\sum_{i=0}^{T-1} r^i &=& {r^{T} - 1 \over r-1}\nonumber \\
\label{alf}
&\leq& \ofrac{n}{2} r^T,
\end{eqnarray}
since $r-1 = \sfrac{2}{n} + \sfrac{1}{n^2} \geq \sfrac{2}{n}$.
Since $T < c n \log n$, the quantity (\ref{alf}) is at most
\[
\ofrac{n}{2}
\left( 1 + \ofrac{1}{n} \right)^{2 c n \log n} \leq \half n^{1 + 2c}.
\]
Substituting this into (\ref{sum}) yields the lemma.
\end{proof}

\subsection{Proof of the Lower bound}
Recall that $f(s) = \half + \half e^{-as} \sin(bs)$,
for some $a = 2.0888...$ and $b = 7.4615...$.
The rough idea for the lower bound is as follows.
Note that if $c$ is sufficiently small and $s < c \log n$,
then the fluctuation of $f(s)$ between $s$ and $s+1$
is of higher order than $n^{-1/2}$.
Thus
in the
corresponding round of the \ccr shuffle,
there will be an interval of cards where the probability
of inserting to the left of the barrier is detectably high.
Before we give the proof, we recall Hoeffding's bounds in \cite{H}.
\begin{theorem}
Let $X_1, \dots, X_k$ be samples from a population of $0$'s and $1$'s,
and let $p = \e(X_1)$ be the proportion of $1$'s in the population.
Then
\begin{equation}
\label{hoeffding}
\P\left( \sum_{i=1}^k X_i - kp \geq \alpha\right) \leq e^{-2 \alpha^2/k}.
\end{equation}
The bound (\ref{hoeffding}) applies whether the sampling
is done with or without replacement.
\end{theorem}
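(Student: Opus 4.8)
The final statement is the Hoeffding tail bound, for sampling both with and without replacement. The plan is the classical exponential-moment (Chernoff) argument, with the without-replacement case reduced to the with-replacement case by a convexity comparison.

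The first step is the standard reduction to a moment generating function bound, which is identical in the two sampling regimes. For $\lambda>0$, Markov's inequality gives
$\P(\sum_{i=1}^k X_i - kp \geq \alpha) \leq e^{-\lambda\alpha}\,\E\exp(\lambda\sum_{i=1}^k(X_i-p))$, so it suffices to show $\E\exp(\lambda\sum_{i=1}^k(X_i-p)) \leq e^{k\lambda^2/8}$; taking $\lambda = 4\alpha/k$ then yields the claimed bound $e^{-2\alpha^2/k}$. In the with-replacement case the $X_i$ are i.i.d., so the moment generating function factors as $(\E e^{\lambda(X_1-p)})^k$, and each factor is at most $e^{\lambda^2/8}$ by Hoeffding's lemma: a mean-zero random variable taking values in an interval of length $1$ (here $[-p,1-p]$) has moment generating function bounded by $e^{\lambda^2/8}$. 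This settles the with-replacement case.

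For the without-replacement case I would prove the convexity comparison $\E\,\psi(\sum_{i=1}^k X_i) \leq \E\,\psi(\sum_{i=1}^k Y_i)$ for every convex $\psi$, where the $X_i$ are drawn without replacement and the $Y_i$ with replacement from the same population; applying this with $\psi(t)=e^{\lambda(t-kp)}$ transfers the moment generating function bound of the previous step, after which the Chernoff optimization concludes. To prove the comparison: condition the with-replacement sample $Y_1,\dots,Y_k$ on the number $m\le k$ of distinct population elements it touches. By symmetry that set of $m$ elements is a uniformly random $m$-subset, and given this data the conditional expectation of $\sum_i Y_i$ equals $k$ times the mean $\overline a_m$ of a size-$m$ without-replacement sample, so Jensen's inequality gives $\E[\psi(\sum_i Y_i)\mid m,\text{subset}] \geq \psi(k\,\overline a_m)$. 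It then remains to observe that the convex order of the without-replacement sample mean is monotone in the sample size: deleting a uniformly random element from a uniform $n$-subset produces a uniform $(n-1)$-subset whose mean has conditional expectation equal to the original mean, so by Jensen $j\mapsto \E[\psi(k\,\overline a_j)]$ is non-increasing in $j$. Since $m\le k$ this gives $\E[\psi(k\,\overline a_m)] \geq \E[\psi(k\,\overline a_k)] = \E[\psi(\sum_i X_i)]$, and combining the two displays proves the comparison.

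The only non-routine ingredient is the convex comparison between the two sampling schemes — equivalently, Hoeffding's observation that sampling without replacement is dominated in the convex order by sampling with replacement — and this is where I expect the work to lie; the rest is the textbook Chernoff/Hoeffding-lemma computation. It is worth checking the edge cases separately: when $kp$ is not an integer the argument is unaffected, and when $k=N$ the without-replacement sum is deterministic and equal to $kp$, so the comparison reduces to a single application of Jensen's inequality.
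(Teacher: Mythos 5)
Your argument is correct, but a point of context: the paper does not prove this theorem at all. It is stated as a recollection of Hoeffding's bounds and attributed to \cite{H} (Hoeffding, 1963), so there is no ``paper's proof'' to compare against. What you have written is, in outline, Hoeffding's own route: the with-replacement bound via Markov's inequality plus Hoeffding's lemma ($\E e^{\lambda(X_1-p)}\le e^{\lambda^2/8}$ since $X_1-p$ lies in an interval of length $1$) and the optimization $\lambda=4\alpha/k$ giving $e^{-2\alpha^2/k}$; and the without-replacement case by the convex-order domination $\E\,\psi\bigl(\sum X_i\bigr)\le\E\,\psi\bigl(\sum Y_i\bigr)$ for convex $\psi$, which is Hoeffding's Theorem~4 in \cite{H}. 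Your derivation of the domination is sound: conditioning the i.i.d.\ sample on the set $S$ of $m$ distinct indices hit, exchangeability makes $S$ a uniform $m$-subset and each $Y_i$ conditionally uniform on $S$, so Jensen gives $\E[\psi(\sum Y_i)\mid m,S]\ge\psi(k\,\overline a_S)$; and the reverse martingale from deleting a uniformly chosen element (which satisfies $\E[\overline a_{T'}\mid T]=\overline a_T$) shows, again by Jensen, that $j\mapsto\E\,\psi(k\,\overline a_j)$ is non-increasing, so $m\le k$ lets you pass to $\E\,\psi(k\,\overline a_k)=\E\,\psi(\sum X_i)$. The one ingredient you treat as ``non-routine,'' the convex comparison, is exactly the content Hoeffding supplies, and your subset-conditioning proof of it is a clean modern version of his argument; the edge-case remarks ($kp$ non-integral, $k=N$) are correct and harmless.
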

\begin{proofof}{Proof of Theorem \ref{lowerbound}}
Let $c > 0$ be small enough so that
\begin{equation}
\label{cstuff}
c<{1\over 2+2a}.
\end{equation}
Fix $T$ with $n < T < c n \log n$ and let $x = T/n$.
Suppose that $\sin(bx) \leq 0$.
The case $\sin(bx) > 0$ is similar.
Since $b > 2\pi$, there exist
$x_1, x_2$ with $x-1 < x_1 < x_2 < x$, such that
\[
bx_1 = 2\pi k + \pi/4,\;\;\;\; \mbox{and}
\]
\[
bx_2 = 2\pi k + 3\pi/4, \;\;\;\;\;\;\;\;\;
\]
for an integer $k$. Note that for $s \in [x_1, x_2]$ we have
\begin{eqnarray}
f(s) &\geq& \half + \beta e^{-as}\nonumber \\
\label{fbound}
&\geq& \half + \beta n^{-ac},
\end{eqnarray}
where $\beta = \half \sin(\pi/4)$.
The second inequality holds because
$x \leq c \log n$.

Let $A$ be the event that
$|\sfrac{1}{n} B_t - f(t/n)| \leq {\beta \over 4} n^{-ac}$
for $t$ with $T - n < t \leq T$.
Note that since $T < c n \log n$, substituting
$T$ into the upper bound of Lemma \ref{comparisonfg} implies that if $t \leq T$ then
$\left | g(t) - f(t/n) \right| < B n^{2c - 1}$, for
a constant $B > 0$.
Since $2c-1 < -ac$ by (\ref{cstuff}),
for sufficiently large $n$ we have
\[
B n^{2c - 1} < \sfrac{\beta}{8} n^{-ac},
\]
and hence
$\left | g(t) - f(t/n) \right| < \sfrac{\beta}{8} n^{-ac}$
for $t \leq T$.
Hence
\begin{eqnarray}
\P(A^c) &\leq& \P( \left| \sfrac{1}{n} B_t - g(t) \right| > \sfrac{\beta}{8} n^{-ac}
\;\;\mbox{for some $t$ with $T-n < t \leq T$})\nonumber \\
&\leq& 2n \exp \Bigl(- \left[ \sfrac{\beta}{8} n^{-ac} \right]^2 n^{1-2c} \Bigr)\nonumber \\
&=&
\label{tz}
2n \exp \Bigl(- \sfrac{\beta^2}{64} n^{1 - 2c(a+1)} \Bigr),
\end{eqnarray}
where the second inequality follows from Lemma \ref{deviation} and
a union bound. Since $1  - 2c (a+1) > 0$ by (\ref{cstuff}),
the quantity (\ref{tz}), and hence $\P(A^c)$,
converges to $0$ as $n \to \infty$.

Let $I = \{ t \bmod n: nx_1 < t < nx_2 \}$ and $m = |I|$.
Since $x_2 - x_1 = \pi/2b$, there is a constant $\lambda$ such that
$m \geq \lambda n$ for sufficiently large $n$.
Let $N$ be the number of cards in $I$
(that is, cards whose label is in $I$)
placed to the left
of the barrier between times $nx_1$ and $nx_2$. Then $N$
is also the number of cards from $I$ to the left of the barrier at time $T$.
By (\ref{fbound}), on the event $A$ the insertion probabilities $\sfrac{B_t}{n}$
are bounded below by $\half + \sfrac{3 \beta}{4} n^{-ac}$ for
$t$ with $nx_1 < t < n x_2$.
Hence
the conditional distribution of $N$ given $A$ stochastically
dominates the Binomial($m, \half + \sfrac{3 \beta}{4} n^{-ac}$)
distribution. Thus Hoeffding's bounds give
\begin{eqnarray}
\P\left( N < \sfrac{m}{2} + \sfrac{\beta}{2} m n^{-ac} \given A \right)
&\leq& \exp\left({-2 \left( \sfrac{\beta}{4} m n^{-ac} \right)^2 \over m} \right)\nonumber \\
\label{gz}
&\leq& \exp \left(- \sfrac{\beta^2}{8} \lambda n^{1-2ac} \right),
\end{eqnarray}
where the second line follows from the fact that $m \geq \lambda n$. Since $1 - 2ac > 0$
by (\ref{cstuff}), the quantity (\ref{gz}) converges to $0$
as $n \to \infty$.

Now let $Y$ be the number of cards in $I$ having position less than
$\sfrac{n}{2} + \sfrac{\beta}{4} n^{1 - ac}$
at time $T$.
Since $f\left( T \over n \right) \leq \half$, we
have $B_T \leq
\sfrac{n}{2} + \sfrac{\beta}{4} n^{1 - ac}$ on the event $A$, and hence
\begin{eqnarray}
\P( Y \leq \sfrac{m}{2} + \sfrac{\beta}{2} m n^{-ac}) &\leq&
\label{eee}
\P( N \leq \sfrac{m}{2} + \sfrac{\beta}{2} m n^{-ac} \given A) + \P(A^c),
\end{eqnarray}
which converges to $0$
as $n \to \infty$.

To complete the proof, let $Y_u$ be the number
of cards in $I$ whose position is
less than
$\sfrac{n}{2} + \sfrac{\beta}{4} n^{1 - ac}$
in a uniform random permutation.

Hoeffding's bounds imply that
\begin{eqnarray}
\P(
Y_u > \sfrac{m}{2} + \sfrac{\beta}{2} m n^{-ac}
)
&\leq&
\exp\left( { -2 \left(\sfrac{\beta}{4} m n^{-ac} \right)^2 \over m} \right) \nonumber\\
&\leq&
\label{fff}
\exp \left( -  {\beta^2 \over 8} \lambda n^{1 - 2ac} \right),
\end{eqnarray}
for sufficiently large $n$.
Since $1 - 2ac > 0$, the quantity (\ref{fff}) converges to $0$ as $n \to \infty$. Combining this with (\ref{eee}), we conclude that $t_{\rm mix}(\eps)\geq cn\log n$ for large enough $n$.
\end{proofof}

\section{Upper bound}
\label{upper}
We use the path coupling technique introduced by Bubley and Dyer \cite{BD}.
Let $S_n$ be the permutation group and $G=(S_n, E)$,
where an edge exists between two permutations if and only if they differ by an adjacent transposition.
The path metric on $G$ is defined by
$$\rho(x,y)=\min\{{\rm length\,\, of\, \,\eta}: \eta \,\,{\rm is\,\,a\,\, path\, \,from\,\,} x\,\,{\rm to}\,\, y\}.$$
Define $$\diam(G)=\sup_{x,y}\rho(x,y).$$
The following theorem is from \cite{BD}. See also \cite[Chapter 14]{LPW}.

\begin{theorem}\label{pathcoupling}
Suppose that there exists $\alpha>0$ such that for each edge $\{x,y\}$ in $G$ there exists a coupling $(X_1, Y_1)$ of
the distributions $\P(x,\cdot)$ and $\P(y,\cdot)$ such that $$\E_{x,y}\rho(X_1,Y_1)\leq \rho(x,y)e^{-\alpha}.$$
Then $$t_{\rm mix}(\eps)\leq\frac{-\log\eps+\log(\diam(G))}{\alpha}.$$
\end{theorem}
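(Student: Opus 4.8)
The plan is the standard three-step argument behind the path-coupling method.

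\textbf{Step 1 (from edges to all pairs).} First I would upgrade the one-step contraction hypothesis --- assumed only for edges $\{x,y\}$, where $\rho(x,y)=1$ --- to \emph{every} pair $u,v\in S_n$. Given $u,v$, fix a geodesic $u=z_0,z_1,\dots,z_r=v$ in $G$ with $r=\rho(u,v)$, and glue the edge-couplings along their shared marginals: draw $W_0\sim\P(z_0,\cdot)$, and for $i=1,\dots,r$ draw $W_i$ from the conditional law of the second coordinate of the edge-coupling of $\bigl(\P(z_{i-1},\cdot),\P(z_i,\cdot)\bigr)$ given that its first coordinate equals $W_{i-1}$. Then each consecutive pair $(W_{i-1},W_i)$ has exactly the law of the edge-coupling for $\{z_{i-1},z_i\}$, so $\E\,\rho(W_{i-1},W_i)\le e^{-\alpha}$, and since $\rho$ obeys the triangle inequality,
\[
\E\,\rho(W_0,W_r)\;\le\;\sum_{i=1}^{r}\E\,\rho(W_{i-1},W_i)\;\le\;re^{-\alpha}\;=\;\rho(u,v)\,e^{-\alpha}.
\]
Hence $(W_0,W_r)$ is a coupling of $\P(u,\cdot)$ and $\P(v,\cdot)$ with $\E\,\rho(W_0,W_r)\le e^{-\alpha}\rho(u,v)$; i.e.\ the contraction estimate now holds for all pairs.

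\textbf{Step 2 (iterate in time).} Next I would run this extended coupling Markovianly: from $(X_{t-1},Y_{t-1})=(u,v)$ draw $(X_t,Y_t)$ using the extended coupling of $\P(u,\cdot)$ and $\P(v,\cdot)$. Starting from $X_0=x$, $Y_0=y$, an easy induction shows $X_t\sim\P_x(X_t\in\cdot)$ and $Y_t\sim\P_y(Y_t\in\cdot)$, while $\E[\rho(X_t,Y_t)\mid X_{t-1},Y_{t-1}]\le e^{-\alpha}\rho(X_{t-1},Y_{t-1})$; taking expectations and iterating gives $\E\,\rho(X_t,Y_t)\le e^{-\alpha t}\rho(x,y)\le e^{-\alpha t}\diam(G)$. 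Since $\rho$ is integer-valued with $\rho(u,v)\ge1$ whenever $u\ne v$, Markov's inequality then gives $\P(X_t\ne Y_t)\le\E\,\rho(X_t,Y_t)\le e^{-\alpha t}\diam(G)$.

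\textbf{Step 3 (coupling inequality).} Finally I would pass to total-variation distance from the stationary distribution $\U$. Writing $\U=\sum_y\U(y)\,\P_y(X_t\in\cdot)$ by stationarity, and then using the triangle inequality for $\|\cdot\|$ followed by the coupling inequality,
\[
\|\P_x(X_t\in\cdot)-\U\|\;\le\;\max_{y}\|\P_x(X_t\in\cdot)-\P_y(X_t\in\cdot)\|\;\le\;\max_{y}\P(X_t\ne Y_t)\;\le\;e^{-\alpha t}\diam(G).
\]
This is at most $\eps$ as soon as $e^{-\alpha t}\diam(G)\le\eps$, i.e.\ as soon as $t\ge\bigl(-\log\eps+\log\diam(G)\bigr)/\alpha$; taking the least such integer $t$ yields $t_{\rm mix}(\eps)\le\bigl(-\log\eps+\log\diam(G)\bigr)/\alpha$.

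\textbf{Main obstacle.} The only genuinely delicate point is Step 1: one must check that the gluing of the edge-couplings along their common marginals is legitimate and that each glued consecutive pair really carries the edge-coupling law, so that the per-edge contraction can be summed along the geodesic --- this is where the ``path'' in path coupling does its work. Everything afterwards --- the Markov property, the triangle inequality for $\rho$, Markov's inequality, and the standard coupling inequality --- is routine bookkeeping.
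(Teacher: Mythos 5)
The paper does not prove Theorem~\ref{pathcoupling}: it quotes the result from Bubley and Dyer \cite{BD} (see also \cite[Chapter~14]{LPW}). Your argument is the standard, correct proof from those sources. Step~1 (gluing the edge couplings along a geodesic and summing the per-edge contractions via the triangle inequality for $\rho$) is precisely the ``path'' half of path coupling; Step~2 is the usual Markovian iteration of the extended one-step contraction; Step~3 combines the coupling inequality with the decomposition $\U=\sum_y\U(y)\,\P_y(X_t\in\cdot)$ to pass from two-point comparisons to distance from stationarity. The one minor imprecision, which is already present in the theorem as stated, is that strictly $t_{\rm mix}(\eps)$ is bounded by the ceiling of $(-\log\eps+\log\diam(G))/\alpha$ rather than by that quantity itself; dropping the ceiling is a harmless and conventional abuse.
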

For a permutation $x$, define $\sigma_t^x$ to be the
\ccr shuffle starting at $x$.
Our mixing time upper bound follows from the following lemma.
\begin{lemma}\label{onestep}
If permutations
$x$ and $y$ differ by an adjacent transposition and $n\geq 4$, there is a coupling of $\sigma_n^x$ and $\sigma_n^y$ such that
$$\E\rho(\sigma_n^x,\sigma_n^y)\leq e^{-\alpha},$$
where $\alpha = 2(\log 2 - \log(e-1))$.
\end{lemma}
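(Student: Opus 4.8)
The plan is to apply the path‑coupling criterion (Theorem~\ref{pathcoupling}) with $G$ the adjacent‑transposition graph, so that it suffices to exhibit, for each edge $\{x,y\}$, a coupling of the two rounds $\sigma_n^x$ and $\sigma_n^y$ with $\E\rho(\sigma_n^x,\sigma_n^y)\le e^{-\alpha}$, where $e^{-\alpha}=\tfrac{(e-1)^2}{4}$. I would run the two rounds together using the natural \emph{same‑position coupling}: at each of the $n$ steps, remove the designated card from both decks and reinsert it at the same uniformly random position in both. This is plainly a valid coupling.

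For the analysis, write $p,q$ for the two transposed cards, which occupy adjacent positions in $x$, and track the ``defect'', i.e.\ the set of positions on which $\sigma_t^x$ and $\sigma_t^y$ disagree; one checks this is always a contiguous block, of size $2$ at time $0$. Let $s_1<s_2$ be the steps of the round at which the two transposed cards are processed, let $P$ be the one processed first and $Q$ the other, and let $\mathcal F_t$ be the natural filtration. The first estimate is that, until $P$ is processed, the two decks differ by exactly the transposition $(p\,q)$ (inserting a card at a fixed absolute position commutes with swapping $p$ and $q$), and if $d_t$ is the distance between $P$ and $Q$ then $\rho_t=2d_t-1$. Writing $W_t=d_t-1$ for the number of cards strictly between $P$ and $Q$, one checks that, whether or not the card processed at step $t{+}1$ lies between $P$ and $Q$, $\E[W_{t+1}\mid\mathcal F_t]\le(1+\tfrac1n)W_t+\tfrac1n$; since $W_0=0$, iteration gives $\E[W_t]+1\le(1+\tfrac1n)^t$, hence $\E[d_{s_1-1}]\le(1+\tfrac1n)^{s_1-1}\le e$.

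The second estimate is that processing $P$ roughly halves the distance: after removing $P$ the two decks differ only by a cyclic displacement of $Q$ inside a block of size $d_{s_1-1}$, so $\rho$ drops to $d_{s_1-1}-1$; reinserting $P$ keeps it there with probability $\ge 1-d_{s_1-1}/n$ and otherwise raises it by at most $2$, leaving the defect in the clean form ``$Q$ displaced inside a block'' up to $O(1)$ contamination. The third estimate handles the remaining steps: during $s_1{+}1,\dots,s_2{-}1$ the block can only grow by absorbing cards, again at rate at most $\tfrac1n$ times its size, so another Gronwall bound applies; and if the defect is still of the clean form when $Q$ is processed, then removing $Q$ makes the two decks identical and $\rho_n=0$, while otherwise one bounds $\rho_n$ by the number of absorbed cards. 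Assembling these (together with a similar Gronwall bound for the steps after $s_2$) and taking the worst case over the positions $s_1<s_2$ of the two transposed cards in the processing order should yield $\E\rho(\sigma_n^x,\sigma_n^y)\le\tfrac{(e-1)^2}{4}=e^{-\alpha}$.

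The main obstacle is the third estimate: with the naive same‑position coupling the ``$Q$ displaced inside a block'' structure is \emph{not} preserved when a card lands in the interior of the block, so one must either tweak the reinsertion rule near the defect to restore the structure — which forces a somewhat delicate matching of the reinsertion marginals in the two decks — or keep the naive coupling and carefully bound the accumulated contamination. In either case one has to carry second moments of the block size through the argument to absorb the $O(1/n)$ error terms, and obtaining the exact constant $\tfrac{(e-1)^2}{4}$ (rather than merely $O(1)$) requires the precise optimization over where the two transposed cards fall among the $n$ steps.
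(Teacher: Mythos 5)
The gap you flag at the end is real, and the paper's resolution of it is precisely the idea your sketch is missing: it does \emph{not} stick with same-position coupling for the whole round. Instead it uses position coupling only while processing cards $1,\dots,i-1$ (so the two decks still differ by the single transposition $(i\,j)$, just with some cards — the ``$a$'s'' — inserted between $i$ and $j$), and then switches to \emph{label coupling} (insert $t\bmod n$ to the right of the same random card in both decks) for the remaining cards $i,\dots,n$. Under label coupling the two configurations retain a rigid combinatorial structure: at the end of the round one deck reads $\cdots a a' a'' \cdots\, b b' \cdots$ and the other reads $\cdots b b' \cdots\, a a' a'' \cdots$, i.e.\ the two are related by sliding an $A$-block of size $A_n$ past a $B$-block of size $B_n$. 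This gives the bound $\rho(\sigma_n^x,\sigma_n^y)\le A_n B_n$, a \emph{product} of two block sizes, not a sum, and the recursions for $\E[A_t]$, $\E[A_t(B_t+1)]$, and $\E[A_tB_t]$ (each factor growing like $(1+\tfrac1n)$, the product like $(1+\tfrac2n)$) are what produce, after optimizing over the relative positions $\beta=i/n$ and $\gamma=j/n$, the value $\left(\tfrac{e-1}{2}\right)^2$.

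Your Gronwall-type estimates on $\E[W_t]$ and on the block size are individually fine, but they are additive in nature and cannot by themselves give the constant $(\tfrac{e-1}{2})^2<1$: in the worst case for your bookkeeping the expected defect size can be of order a constant $>1$, and you would need a second-moment or product structure to conclude contraction in expectation. More fundamentally, as you observed, same-position coupling destroys the ``$Q$ displaced inside a block'' form as soon as an insertion lands inside the block with mismatched content, and patching the reinsertion rule locally to repair it is exactly what label coupling does globally, at no cost. So the correct fix is to adopt label coupling from the moment the first transposed card is processed (the paper actually starts label coupling one step earlier, at $t=i$), track $A_t$ and $B_t$ separately, and bound $\rho$ by their product — not to carry contamination terms through a linear recursion.
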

\begin{proof}
There is another natural coupling of
two \ccr processes besides label coupling; we call this second
coupling {\it position coupling}. In position coupling,
the card is inserted into the same locations in both processes.
Now assume that for some $i < j$, the permutation $x$
can be obtained from $y$ by transposing the cards with label $i$ and $j$, as shown below.
In the diagram, the $k$th $X$ in the top row represents the same
card as the  $k$th $X$ in the bottom row.
\[\begin{array}{ccccccccc}
x:&X&X&X&i&j&X&X&X\\
y:&X&X&X&j&i&X&X&X
\end{array}\]
The coupling strategy is divided into $3$ stages,
corresponding to $t$ in
$\{1,\cdots, i-1\}$, $\{i,\cdots,j-1\}$, and $\{j,\cdots,n\}$ respectively.

Stage 1: moving cards $1, \dots, i-1$.
In this stage
use position coupling.
As is shown by diagram 1 below,
at the end of this stage
we still have two permutations that differ only by a transposition of $i$ and $j$.
However, there may have been some cards inserted between cards $i$ and $j$; we represent these cards with $a$'s.
\[\begin{array}{ccccccccc}
\sigma_{i-1}^x:&X&X&i&a&a&j&X&X\\
\sigma_{i-1}^y:&X&X&j&a&a&i&X&X
\end{array}\]
\begin{center}
diagram 1
\end{center}
Stage 2: moving cards $i, \dots, j-1$. In this stage we use label coupling.
At the end of this stage, some cards might have been
inserted into the group of $a$'s. We denote such cards with
$\ap$'s. In addition, some cards might have been inserted between
card $j$ and the first $X$ to the right of the card $j$. We represent them with $b$'s.
Diagram 2 shows a typical pair of permutations after stage 2.
\[\begin{array}{cccccccccccc}
\sigma_{j-1}^x:&X&X&a&\ap&a&\ap&j&b&b&X&X\\
\sigma_{j-1}^y:&X&X&j&b&b&a&\ap&a&\ap&X&X
\end{array}\]
\begin{center}
diagram 2
\end{center}
Stage 3:  moving cards $j, \dots, n$. Here  we use label coupling again.
Cards inserted into  the group of $a$'s and $\ap$'s are represented with $\app$'s,
and cards inserted into the group of $b$'s are represented with $\bp$s.
See diagram 3 below.
\[\begin{array}{cccccccccccccc}
\sigma_n^x:&X&X&a&\ap&\app&a&\app&\ap&b&\bp&b&X&X\\
\sigma_n^y:&X&X&b&\bp&b&a&\ap&\app&a&\app&\ap&X&X
\end{array}\]
\begin{center}
diagram 3
\end{center}
For $t \leq n$, let $A_t$ be the number of $a$'s, $\ap$'s and $\app$'s,
and let $B_t$ be the number of $b$'s and $\bp$'s, after card $t$ has been moved.
Note that $$\rho(\sigma_n^x, \sigma_n^y)\leq A_nB_n.$$
Thus we are left to estimate $\E (A_nB_n)$.

Initially we have $A_0=B_0=0$. Recall that in the first stage we use position coupling.
For $t\leq i-1$ we have $B_t=0$ and $A_t$ satisfies
$$\P(A_{t+1}=A_t|A_t)={n-A_t-1\over n},$$ and $$\P(A_{t+1}=A_t+1|A_t)={A_t+1\over n}.$$
This implies
\be\E(A_{t+1}+1|A_t)=(A_t+1)\left(1+{1\over n}\right).\ee
Hence
\be\label{firststep}\E A_{i-1}= \left(1+{1\over n}\right)^{i-1}-1.\ee

Recall that we use label coupling in the second stage.
For $i\leq t\leq j-1$, we have the following transition rule:
$$\P(A_{t+1}=A_t, B_{t+1}=B_t|A_t,B_t)={n-A_t-B_t-1\over n},$$ and $$\P(A_{t+1}=A_t+1, B_{t+1}=B_t|A_t,B_t)={A_t\over n},$$ and
$$\P(A_{t+1}=A_t, B_{t+1}=B_t+1|A_t,B_t)={B_t+1\over n}.$$ This implies
$$\E(A_{t+1}(B_{t+1}+1)|A_t, B_t)=A_t(B_t+1)\left(1+{2\over n}\right).$$
Recall that $B_t=0$ for all $t\leq i-1$. Thus we have
\be
\label{dagger}
\E A_{j-1}(B_{j-1}+1)=\E A_{i-1}\left(1+{2\over n}\right)^{j-i}.
\ee
Note that for $t$ with $i \leq t < j$ we have
\be\E(A_{t+1}|A_t)=A_t\left(1+{1\over n}\right).\ee
Thus $\E A_{j-1}=\E A_{i-1}(1+{1\over n})^{j-i}$. Combining this with (\ref{dagger}) and
(\ref{firststep}) gives
\be\label{secondstep}
\E A_{j-1}B_{j-1}=\left( \left(1+{1\over n}\right)^{i-1}-1\right)\left(\left(1+{2\over n}\right)^{j-i}-
\left(1+{1\over n}\right)^{j-i}
\right).
\ee
For $j\leq t\leq n$ we have the following transition probabilities:
\[
\P(A_{t+1}=A_t, B_{t+1}=B_t|A_t,B_t)={n-A_t-B_t\over n};
\]
\[
\P(A_{t+1}=A_t+1, B_{t+1}=B_t|A_t,B_t)={A_t\over n}; \]
\[
\P(A_{t+1}=A_t, B_{t+1}=B_t+1|A_t,B_t)={B_t\over n}.\]
This implies $$\E(A_{t+1}B_{t+1}|A_t, B_t)=A_tB_t\left(1+{2\over n}\right).$$
Using (\ref{secondstep}),
we obtain
$$\E A_n B_n=\left(\left(1+{1\over n}\right)^{i-1}-1\right)
\left[\left(1+{2\over n}\right)^{j-i}-
\left(1+{1\over n}\right)^{j-i}\right]
\left(1+{2\over n}\right)^{n-j+1}.$$
Since $1 + \sfrac{2}{n} \leq \left(1 + \sfrac{1}{n} \right)^2$, the expression in
square brackets is at most $\left(1 + \sfrac{1}{n} \right)^{j-i} \left( \left(1 + \sfrac{1}{n} \right)^{i-j} - 1 \right)$.
Thus if we define $\beta$ and $\gamma$ so that
$i = \beta  n$ and $j = \gamma n$, calculation yields that
$$\E A_n B_n \leq (e^\beta- 1) e^{\gamma - \beta}
(e^{\gamma - \beta} - 1) e^{2 (1 - \gamma)},
$$ if $0\leq \beta\leq \log 2$, and
$$\E A_n B_n \leq (e^\beta- 1) e^{\gamma - \beta}
(e^{\gamma - \beta} - 1) e^{2 (1 - \gamma)}\left(1 + \sfrac{2}{n} \right),
$$ if $\log 2<\beta\leq1$.
The former expression is maximized, for $\gamma$ and $\beta$ with
$0\leq \beta\leq \gamma\leq 1$, by $\left({e-1\over 2}\right)^2$. The maximum occurs when $\gamma=1$ and $\beta=\log{2e\over e+1}$. Notice that $\log{2e\over e+1}<\log 2$.
Therefore, if $\alpha= 2(\log 2-\log(e-1))$,
then
\[
\e( A_nB_n) \leq e^{-\alpha},
\] for all $0\leq \beta\leq\gamma\leq1$ and $n\geq4$.
which completes the proof.
\end{proof}

\begin{proofof}{Proof of Theorem \ref{upperbound}}
We apply Theorem \ref{pathcoupling} to a round of the \ccr shuffle.
Since the diameter of $S_n$ with respect to adjacent
transpositions is ${n(n-1)\over 2} < n^2,$ substituting
the $\alpha$ of Lemma \ref{onestep} into Theorem \ref{pathcoupling}
gives
$$t_{\rm mix}(\eps)\leq {1\over \log2-\log(e-1)} (\log n - 2 \log\epsilon).$$
\end{proofof}

\noindent{\bf{Acknowledgement.}} We thank Ton\'{c}i Antunovi\'{c}, Richard Pymar, Miklos Racz, Perla Sousi and Leonard Wong for
many helpful comments.

\end{document}